  \renewcommand\appendix{\par
  \setcounter{section}{0}
  \setcounter{sub
  section}{0}
  \setcounter{figure}{0}
  \setcounter{table}{0}
  \renewcommand\thesection{ Appendix \Alph{section}}
  \renewcommand\thefigure{\Alph{section}\arabic{figure}}
  \renewcommand\thetable{\Alph{section}\arabic{table}}
}
\tikzstyle{mybox} = [draw=black, fill=white,  thick,
\tikzstyle{mybox} = [draw=black, fill=white,  thick,
\newtheorem{lemma}{Lemma}[section]
\newtheorem{theorem}{Theorem}[section]
\begin{document}

\title{On $k$-tuple and $k$-tuple total domination numbers of regular graphs}


\author{Sharareh Alipour \and Amir Jafari \and Morteza Saghafian}

\maketitle
\begin{abstract}
Let $G$ be a connected graph of order $n$, whose minimum vertex degree is at least $k$. A subset $S$ of vertices in $G$ is a $k$-tuple total dominating set if every vertex of $G$ is adjacent to at least $k$ vertices in $S$. The minimum cardinality of a $k$-tuple total dominating set of $G$ is the $k$-tuple total domination number of $G$, denoted by  $\gamma_{\times k,t}(G)$. Henning and Yeo in \cite{hen} proved that if $G$ is a cubic graph different from the Heawood graph, $\gamma_{\times 2, t}(G) \leq \frac{5}{6}n$, and this bound is sharp.
Similarly, a $k$-tuple dominating set is a subset $S$ of vertices of $G$, $V (G)$ such that $|N[v] \cap S| \geq k$ for every vertex $v$, where 
$N[v] = \{v\}\cup \{u \in V(G) : uv \in E(G)\}$. The $k$-tuple domination number of $G$, denoted by $\gamma_{\times k}(G)$, is the minimum cardinality of a $k$-tuple dominating set of $G$. 

In this paper, we give a simple approach to compute an upper bound for $(r-1)$-tuple total domination number of  $r$-regular graphs. Also, we give an upper bound for the $r$-tuple dominating number of $r$-regular graphs.
In addition, our method gives algorithms to compute dominating sets with the given bounds, while the previous methods are existential.
\end{abstract}


\section{Introduction}
\subsection{Problem statement}
In a graph $G$ with vertex set $V(G)$ and edge set $E(G)$, the open neighborhood of a vertex $v$ is $N(v) = \{u \in V(G):uv \in E(G)\}$ and the closed neighborhood is $N [v] = \{v\} \cup N (v)$. The degree of $v$, denoted by $d(v)$, is the cardinality of $N(v)$. In a graph $G$, $\delta(G)$ is  the minimum degree of vertices of $G$ and $G$ is $r$-regular if $d(v) = r$ for all $v \in V$ .

For a positive integer $k$, a $k$-tuple total dominating set of $G$ is a subset $S$ of $V(G)$ such that $|N(v)\cap S| \geq k$ for all $v \in V(G)$. 
Also, a $k$-tuple dominating set is a subset $S$ of $V(G)$ such that $|N[v]\cap S| \geq k$ for all $v \in V(G)$. 
In the case of $k=1$, $1$-tuple total dominating set and $1$-tuple dominating set are simply called total dominating set and dominating set, respectively. 

The minimum cardinalities of $k$-tuple total dominating sets and $k$-tuple dominating sets are denoted by $\gamma_{\times k,t}(G)$ and $\gamma_{\times k}(G)$, respectively. 
\subsection{Related works and our results}

Domination in graphs is now well studied in graph theory and the literature on this subject has been surveyed and detailed in the two books by Haynes, Hedetniemi, and Slater \cite{hed,sla}.
Dominating sets are of practical interest in several areas. In wireless networking, dominating sets are used to find efficient routes within ad-hoc mobile networks. They have also been used in document summarization, and in designing secure systems for electrical grids. Also, the concept of domination in graph theory is a natural model for many location problems in operations research.
A main application to network purposes of $k$-tuple domination is for fault tolerance or mobility in the following situations. Each vertex of the graph models a node of the network and edges are links. Node $u$ can use a service (any read-only database for example) only if it is replicated on $u$ or on a neighbor of $u$. To ensure a certain degree of fault tolerance or to tolerate mobility of nodes, one can imagine that any node $u$ has in its (closed) neighborhood at least $k$ copies of this service available. As each copy can cost a lot, the number of duplicated copies has to be minimized \cite{klas}.

The complexity of the domination problem has been well-studied in the literature, see \cite{chang}. The hardness of approximation of the  domination problem has also been extensively investigated in the literature, see \cite{aus}. In terms of the complexity of the $k$-tuple domination problem in graphs, a linear-time algorithm for the $2$-tuple domination problem in trees is given in \cite{lia}. A linear-time algorithm for the $k$-tuple domination problem in strongly chordal graphs is presented in \cite{lia2}, where it is also proved that $k$-tuple domination is NP-complete for split graphs and for bipartite graphs.
In \cite{klas}, Klasing and Laforest described a $(\ln |V | + 1)$-approximation algorithm for the $k$-tuple domination problem in
general graphs, and showed that $k$-tuple domination cannot be approximated within a ratio
of $(1-\epsilon)\ln |V|$ for any $\epsilon > 0$ unless $NP\subset DTIME(|V|O(\log \log |V|))$. Then, they proved that the $k$-tuple domination problem can be approximated within a constant ratio if the degree of the graph is bounded by a constant, but that it is APX-hard to approximate for graphs of maximum degree $k + 2$. Also, they showed that the $k$-tuple domination problem can be approximated within a constant ratio in $p$-claw free graphs, but that it is APX-hard to approximate for $5$-claw free graphs. $p$-claw free graphs are graphs which do not have $K_{1,p}$ (a star with $p$ leaves) as an induced subgraph.

While determining the exact value of $\gamma_{\times k,t}(G)$ and $\gamma_{\times k}(G)$ for a graph $G$ are not easy, many studies focus on their upper bounds\cite{har2,klas,lia,lia2}. Here, we present the known upper bounds.

Let $G_{14}$ be the Heawood graph (or, equivalently, the incidence bipartite graph of the Fano plane) on $14$ vertices shown in Figure \ref{fig1}. In \cite{hen}, Henning  and Yeo proved some theorems about strong transversal in hypergraphs and then as an application of their hypergraph results they proved the following theorem.
\begin{theorem}{\cite{hen}}
\label{1}
If $G \neq G_{14}$ is a connected cubic graph of order $n$, then $\gamma_{\times 2,t}(G) \leq \frac{5}{6}n$, and this bound is sharp and $\gamma_{\times 2,t}(G_{14})=12$.
\end{theorem}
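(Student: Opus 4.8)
The plan is to recast $2$-tuple total domination of a cubic graph as a maximum independent set problem in an auxiliary graph, settle it with Brooks' theorem, and then deal by hand with the one exceptional configuration. First I would observe that in a cubic $G$ a set $S\subseteq V(G)$ satisfies $|N(v)\cap S|\ge 2$ for all $v$ exactly when $T:=V(G)\setminus S$ has no two vertices with a common neighbour (equivalently, $|N(v)\cap T|\le 1$ for all $v$). So, introducing the \emph{conflict graph} $H=H(G)$ on $V(G)$ in which $u$ and $w$ are adjacent iff $u\ne w$ have a common neighbour in $G$, a $2$-tuple total dominating set is precisely the complement of an independent set of $H$; hence $\gamma_{\times 2,t}(G)=n-\alpha(H)$. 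The theorem thus reduces to showing $\alpha(H)\ge n/6$ for every connected cubic $G\ne G_{14}$, and $\alpha\big(H(G_{14})\big)=2$.

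Next, for any $v$ every neighbour of $v$ in $H$ lies in $\bigcup_{w\in N_G(v)}\big(N_G(w)\setminus\{v\}\big)$, a set of size at most $3\cdot 2=6$, so $\Delta(H)\le 6$. By Brooks' theorem each component of $H$ is properly $6$-colourable unless it is a $K_7$, and since $\Delta(H)\le 6$ any $K_7$ inside $H$ is automatically a whole component. Hence if $H$ is $K_7$-free then $\chi(H)\le 6$ and $\alpha(H)\ge n/\chi(H)\ge n/6$, giving $\gamma_{\times 2,t}(G)\le \tfrac56 n$. This step is constructive: $H$ and a Brooks colouring (hence a large independent set of $H$, hence the dominating set) are computable in polynomial time, which is the algorithmic content we are after.

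Everything then comes down to showing that the only connected cubic graph whose conflict graph contains a $K_7$ is $G_{14}$; this is the crux. Suppose $v_1,\dots,v_7$ pairwise have a common neighbour. Fixing $i$ and writing $N_G(v_i)=\{a,b,c\}$, each $v_j$ ($j\ne i$) lies in $N(a)\cup N(b)\cup N(c)$ and so does $v_i$; since $v_i\in N(a)\cap N(b)\cap N(c)$ this union has at most $9-2=7$ elements, hence equals $\{v_1,\dots,v_7\}$, and $N(a)\setminus\{v_i\},N(b)\setminus\{v_i\},N(c)\setminus\{v_i\}$ partition $\{v_\ell:\ell\ne i\}$ into pairs; in particular every neighbour of $v_i$ is adjacent to exactly three of the $v_\ell$. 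Running this over all $i$: every vertex with a neighbour in $\{v_1,\dots,v_7\}$ sends all three edges into that set, so it together with its neighbourhood is closed, hence (by connectedness) is all of $V(G)$; and each endpoint of an edge inside $\{v_1,\dots,v_7\}$ has all three $G$-neighbours among the $v_\ell$, so the positive-degree part of the induced subgraph on $\{v_1,\dots,v_7\}$ is $3$-regular — thus on $0$, $4$, or $6$ vertices, and the cases $4,6$ are excluded at once (a $v_i$ in that part has $N_G(v_i)\subseteq\{v_1,\dots,v_7\}$, forcing its common neighbour with any $v_k$ outside the part to be some $v_\ell$, so $v_k\sim v_\ell$, contradicting that $v_k$ has no neighbour among the $v$'s). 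Hence $\{v_1,\dots,v_7\}$ is independent, so $G$ is bipartite with parts $\{v_1,\dots,v_7\}$ and their $7$ neighbours, $3$-regular, with every two $v_\ell$ sharing a neighbour; counting $7\binom{3}{2}=21=\binom{7}{2}$ shows they share exactly one, so $G$ is the incidence graph of the unique $2$-$(7,3,1)$ design, i.e. $G=G_{14}$. For $G_{14}$ the same description gives $H(G_{14})=K_7\cup K_7$ (two points lie on a unique common line, two lines meet in a unique point, a point and a line have no common neighbour), so $\alpha\big(H(G_{14})\big)=2$ and $\gamma_{\times 2,t}(G_{14})=14-2=12$.

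For sharpness I would exhibit the connected cubic bipartite graph on parts $\{p_1,\dots,p_6\}$ and $\{1,\dots,6\}$ with $N(p_j)=T_j$ for $T_1=\{1,2,3\}$, $T_2=\{1,4,5\}$, $T_3=\{1,2,6\}$, $T_4=\{2,4,5\}$, $T_5=\{3,4,6\}$, $T_6=\{3,5,6\}$: the $T_j$ pairwise intersect and together cover every pair of $\{1,\dots,6\}$, so its conflict graph is $K_6\cup K_6$, $\alpha=2$, and $\gamma_{\times 2,t}=12-2=10=\tfrac56\cdot12$; an infinite extremal family is given in \cite{hen}. The main obstacle in this plan is the structural third step — proving that the only way the conflict graph can fail to be $6$-chromatic is by being the double $K_7$ coming from the Fano plane; the rest is a short reduction together with Brooks' theorem.
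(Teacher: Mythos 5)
Your proposal is correct and follows essentially the same route the paper takes for its generalization (Theorem \ref{2} with $r=3$): build the auxiliary graph on $V(G)$ joining two vertices exactly when they have a common neighbour in $G$, bound its maximum degree by $6$, apply Brooks' theorem, and identify the exceptional $K_7$ component with the incidence graph of the Fano plane, i.e.\ $G_{14}$. Your handling of the exceptional case and your explicit sharpness example are, if anything, more detailed than the paper's own sketch.
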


\begin{figure}[h!]
\centering
  \includegraphics[width=50mm]{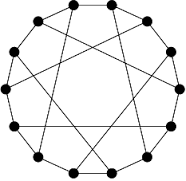}
  \caption{The Heawood graph, $G_{14}$.} 
 \label{fig1}
\end{figure}

Now, let, $\tilde{d}_m=\frac{1}{n} \sum^n_{i=1} {d(v_i)+1\choose m}$. Then, we have the following theorem from \cite{chang2}.
\begin{theorem}{\cite{chang2}}
For any graph $G$ of minimum degree $\delta$ with $1 \le k\le \delta+ 1$

$$\gamma_{\times k}(G)\leq \frac{\ln(\delta-k+2)+\ln \tilde{d}_{k-1}+1}{\delta-k+2} n.$$
\label{11}
\end{theorem}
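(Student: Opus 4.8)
The plan is to adapt the classical probabilistic argument for the ordinary domination number. Fix a parameter $p\in[0,1]$, to be optimized at the end, and form a random set $A\subseteq V(G)$ by placing each vertex of $G$ into $A$ independently with probability $p$. Say a vertex $v$ is \emph{deficient} (for $A$) if $|N[v]\cap A|\le k-1$; every vertex that is not deficient already has at least $k$ of its closed neighbours in $A$. The idea is to enlarge $A$ by a small ``repair set'' that takes care of the deficient vertices, obtaining a $k$-tuple dominating set $S\supseteq A$, and then to bound $\mathbb{E}[|S|]$ from above; since the expectation is an upper bound on the minimum, the theorem follows for a suitable~$p$.

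Next comes the tail estimate. For fixed $v$ the variable $|N[v]\cap A|$ is binomial with parameters $d(v)+1$ and $p$, so I would invoke the elementary inequality
\[
\sum_{j=0}^{k-1}\binom{m}{j}p^{j}(1-p)^{m-j}\ \le\ \binom{m}{k-1}(1-p)^{\,m-k+1}\qquad(0\le p\le1,\ 0\le k-1\le m),
\]
proved by writing $\tfrac1{1-p}=1+\tfrac p{1-p}$, expanding $\binom{m}{k-1}\bigl(1+\tfrac p{1-p}\bigr)^{k-1}$ with the binomial theorem, and comparing coefficients through the identity $\binom{m}{k-1}\binom{k-1}{i}=\binom{m}{i}\binom{m-i}{k-1-i}\ge\binom{m}{i}$. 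With $m=d(v)+1$ and $d(v)\ge\delta$ this gives $\Pr[v\text{ deficient}]\le\binom{d(v)+1}{k-1}(1-p)^{\,\delta-k+2}$, and summing over all $v$, using $1-p\le e^{-p}$ and the definition of $\tilde d_{k-1}$,
\[
\mathbb{E}\bigl[\#\{\text{deficient }v\}\bigr]\ \le\ n\,\tilde d_{k-1}\,(1-p)^{\,\delta-k+2}\ \le\ n\,\tilde d_{k-1}\,e^{-p(\delta-k+2)}.
\]

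For the repair, observe that a deficient $v$ has $|N[v]\setminus A|=d(v)+1-|N[v]\cap A|\ge k-|N[v]\cap A|\ge1$, so one can add $k-|N[v]\cap A|$ vertices of $N[v]$ for each deficient $v$ and thereby turn $S$ into a $k$-tuple dominating set. Granting that this repair set may be taken of expected size at most $\mathbb{E}[\#\{\text{deficient }v\}]$, we obtain $\mathbb{E}[|S|]\le f(p)$ with $f(p):=np+n\tilde d_{k-1}e^{-p(\delta-k+2)}$, and the rest is a one-variable minimization: $f$ attains its minimum at $p^{*}=\ln\!\bigl((\delta-k+2)\tilde d_{k-1}\bigr)/(\delta-k+2)$, where $f(p^{*})=\dfrac{\ln(\delta-k+2)+\ln\tilde d_{k-1}+1}{\delta-k+2}\,n$. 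If $p^{*}\in[0,1]$ this yields the stated bound; one checks $p^{*}\ge0$ always, while if $p^{*}>1$ the right-hand side of the theorem already exceeds $n$, so $S=V(G)$ does the job.

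The step that needs care — and the one I expect to be the real obstacle — is the clause ``of expected size at most $\mathbb{E}[\#\{\text{deficient }v\}]$'' above: the naive bound on the repair set is only $\mathbb{E}\bigl[\sum_v(k-|N[v]\cap A|)\bigr]$, which can be as large as $k$ times $\mathbb{E}[\#\{\text{deficient }v\}]$ and would introduce a spurious $\ln k$ in the numerator. To get exactly the constant $1$ one must choose the added vertices so as to be shared, as much as possible, among the closed neighbourhoods $N[v]$ of the various deficient $v$, and charge the cost through a careful accounting of the overlaps of these neighbourhoods; an alternative that makes the bookkeeping cleaner is to run the whole argument on the complement $T=V(G)\setminus S$, asking only that every vertex have at most $d(v)+1-k$ of its closed neighbours in $T$. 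Everything else — the tail estimate and the optimization — is routine, and because the argument is via the probabilistic method it derandomizes, by the method of conditional expectations, into a polynomial-time algorithm that outputs a $k$-tuple dominating set meeting the bound.
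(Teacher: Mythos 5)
This theorem is quoted in the paper from \cite{chang2} without proof, so there is no in-paper argument to compare against; judging your proposal on its own terms, the overall strategy (a random set $A$ of density $p$, a binomial tail bound, repair of deficient vertices, optimization over $p$) is exactly the standard route to this bound, but the gap you yourself flag is genuine and your proposal does not close it. The repair set must contain, for each deficient $v$, some $k-|N[v]\cap A|$ vertices of $N[v]\setminus A$, so the honest bound on its expected size is $\sum_v\mathbb{E}\bigl[\max(0,k-|N[v]\cap A|)\bigr]=\sum_v\sum_{j=0}^{k-1}(k-j)\Pr\bigl[|N[v]\cap A|=j\bigr]$, whereas your unweighted tail estimate only controls $\sum_j\Pr[\cdot]$. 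Neither of your two suggested escapes works in general: the closed neighbourhoods of the deficient vertices may be pairwise disjoint, so there is nothing to share, and the complement formulation faces the mirror image of the same weighted sum.

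The missing ingredient is that the very same right-hand side bounds the weighted sum: for $m\ge k$ one has
\[
\sum_{j=0}^{k-1}(k-j)\binom{m}{j}p^{j}(1-p)^{m-j}\ \le\ \binom{m}{k-1}(1-p)^{\,m-k+1},
\]
and this follows from your own computation with one extra observation. In the identity $\binom{m}{k-1}\binom{k-1}{i}=\binom{m}{i}\binom{m-i}{k-1-i}$ you may use $\binom{m-i}{k-1-i}\ge\binom{k-i}{k-1-i}=k-i$ (valid because $m\ge k$ gives $m-i\ge k-i\ge k-1-i$) instead of the weaker bound $\ge 1$. Since $m=d(v)+1\ge\delta+1\ge k$ by the hypothesis $k\le\delta+1$, applying this for each $v$ bounds the expected repair cost by $n\,\tilde d_{k-1}(1-p)^{\,\delta-k+2}$ with no spurious factor of $k$ or $\ln k$, and the rest of your argument (the inequality $1-p\le e^{-p}$, the optimization at $p^{*}$, and the trivial case $p^{*}>1$) goes through verbatim. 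This weighted tail lemma is precisely the device used in \cite{chang2} (and in the earlier work of Gagarin and Zverovich that it refines), so with this one-line strengthening your proposal becomes a complete proof along the intended lines; the derandomization remark at the end is then also sound.
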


Also, let, $\hat{d}_m=\frac{1}{n} \sum^n_{i=1} {d_i\choose m}$, then we have the following theorem from \cite{kaz}.
\begin{theorem}{\cite{kaz}}
If $k$ is a positive integer and $G$ is a graph of order $n$ with $\delta > k \geq 1$, then
$$\gamma_{\times k,t}(G)\leq \frac{\ln(\delta-k)+\ln \hat{d}_{k}+1}{\delta-k} n.$$
\label{22}
\end{theorem}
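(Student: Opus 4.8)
\emph{Proof sketch (plan).}
The plan is to run the probabilistic method in two stages — choose a random ``seed'' set, then deterministically repair the vertices that are not yet $k$-tuple totally dominated — bound the expected size of the resulting set, and minimise over the single free parameter.

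First I would fix $p\in[0,1]$, to be pinned down only at the very end, and let $A\subseteq V(G)$ be the random set in which each vertex lies independently with probability $p$. For a vertex $v$ put $X_v=|N(v)\cap A|$, so $X_v$ is binomially distributed with parameters $d(v)$ and $p$, and call $v$ \emph{deficient} when $X_v\le k-1$. For each deficient $v$ I would pick an arbitrary $B_v\subseteq N(v)\setminus A$ with $|B_v|=k-X_v$; this is possible since $|N(v)\setminus A|=d(v)-X_v\ge\delta-X_v>k-X_v$ because $\delta>k$. Setting $B_v=\emptyset$ for the non-deficient $v$ and $S=A\cup\bigcup_{v}B_v$, one has $|N(v)\cap S|\ge X_v+|B_v|\ge k$ for every $v$, so $S$ is a $k$-tuple total dominating set, and, writing $x^{+}=\max(0,x)$,
\begin{equation*}
\mathbb{E}\,|S|\ \le\ \mathbb{E}\,|A|+\sum_{v\in V(G)}\mathbb{E}\,|B_v|\ =\ pn+\sum_{v\in V(G)}\mathbb{E}\bigl[(k-X_v)^{+}\bigr].
\end{equation*}

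The key estimate, and the step I expect to be the main obstacle, is the sharp bound $\mathbb{E}\bigl[(k-X)^{+}\bigr]\le\binom{d}{k}(1-p)^{d-k}$ for $X\sim\mathrm{Bin}(d,p)$ with $d>k$. I would obtain it by setting $q=1-p$ and letting $Z=d-X\sim\mathrm{Bin}(d,q)$ count the non-chosen neighbours; the binomial-moment identity gives $\mathbb{E}\bigl[\binom{Z}{d-k}\bigr]=\binom{d}{d-k}q^{d-k}=\binom{d}{k}q^{d-k}$, so it is enough to verify the pointwise inequality $(k-j)^{+}\le\binom{d-j}{d-k}$ for all integers $j$ with $0\le j\le d$. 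For $j>k$ the right side is $0$; for $0\le j\le k$ it equals $\binom{d-j}{k-j}$, and since $d-j\ge(k-j)+1$ (because $d>k$) this is at least $\binom{(k-j)+1}{k-j}=k-j+1>(k-j)^{+}$. Summing the estimate over all $v$, bounding each exponent $d(v)-k$ below by $\delta-k$ (legitimate as $q\le1$ and $d(v)\ge\delta$), and using $\sum_v\binom{d(v)}{k}=n\hat d_k$ together with $1-p\le e^{-p}$, I get $\mathbb{E}\,|S|\le pn+(1-p)^{\delta-k}n\hat d_k\le n\bigl(p+\hat d_k\,e^{-p(\delta-k)}\bigr)$.

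It remains to minimise $p\mapsto p+\hat d_k e^{-p(\delta-k)}$; its derivative vanishes at $p^{*}=\dfrac{\ln\!\bigl(\hat d_k(\delta-k)\bigr)}{\delta-k}$, where the value is $\dfrac{\ln(\delta-k)+\ln\hat d_k+1}{\delta-k}$, which yields the claimed bound for $\gamma_{\times k,t}(G)\le\mathbb{E}\,|S|$ (when $p^{*}\notin[0,1]$ the right-hand side of the theorem already exceeds $1$ and there is nothing to prove, since $V(G)$ itself is a $k$-tuple total dominating set). Everything after the expectation estimate is routine calculus; the one point that needs care is recognising $(k-X_v)^{+}$ as the correct ``repair cost'' and establishing the tight combinatorial bound for its expectation. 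I would also note that the proof is algorithmic: with $p=p^{*}$ the first stage can be derandomised by the method of conditional expectations — each conditional expectation of $|S|$ is a computable sum of conditioned binomial-type terms — after which the repair step is deterministic, giving a polynomial-time construction of a $k$-tuple total dominating set of the stated size.
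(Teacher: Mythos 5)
The paper does not prove Theorem \ref{22} at all: it is quoted verbatim from the reference \cite{kaz} as background, so there is no in-paper proof to compare against. Judged on its own merits, your argument is correct and complete, and it is essentially the standard probabilistic proof used for bounds of this type in the literature (random seed set of density $p$, greedy repair of deficient vertices, expectation bound, optimization over $p$). Your key estimate $\mathbb{E}\bigl[(k-X)^{+}\bigr]\le\binom{d}{k}(1-p)^{d-k}$ via the identity $\mathbb{E}\bigl[\binom{Z}{d-k}\bigr]=\binom{d}{d-k}q^{d-k}$ and the pointwise inequality $(k-j)^{+}\le\binom{d-j}{d-k}$ is clean and checks out (for $j>k$ note both sides vanish, since then $d-j<d-k$). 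Two small points worth tightening: first, the case $p^{*}<0$ never occurs, because $\hat d_k\ge\binom{\delta}{k}\ge k+1\ge 2$ and $\delta-k\ge1$ force $\hat d_k(\delta-k)>1$, so the only boundary case you need to dispose of is $p^{*}>1$, where (as you say) the stated bound exceeds $n$ and is vacuous; second, the repair step needs $|N(v)\setminus A|\ge k-X_v$, which follows already from $d(v)\ge k$, so the strict inequality $\delta>k$ is really only needed for the exponent $\delta-k\ge1$ in the optimization. Your closing remark about derandomization is a nice bonus and is in the spirit of this paper's emphasis on algorithmic (rather than purely existential) bounds.
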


In the next two sections, we give upper bounds for $(r-1)$-tuple total domination number and $r$-tuple domination number of $r$-regular graphs. Our result for $(r-1)$-tuple total domination number of $r$-regular graphs is an extension of Theorem \ref{1}. Theorem \ref{1} computes an upper bound for $2$-tuple total domination number of $3$-regular graphs, note that for $r=3$, our theorem implies Theorem \ref{1}.

\section{$r$-tuple total dominating set}
In this section, we give a theorem for $(r-1)$-tuple total domination number of $r$-regular graphs.
\begin{theorem}
\label{2}
Let $r\ge 3$. If $G$ is an $r$-regular graph of order $n$ which is not the incident graph of a projective plane of order $r-1$, then $\gamma_{\times (r-1),t}(G) \leq \frac{r(r-1)-1}{r(r-1)}n$. If $G$ is the incidence graph of a projective plane of order $r-1$ then, $\gamma_{\times (r-1),t}(G)=\frac{r(r-1)}{r(r-1)+1}n=2r(r-1)$. 
\end{theorem}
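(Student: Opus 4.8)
The plan is to use a probabilistic deletion argument, which will both give the bound and (since it is constructive) yield an algorithm. Start with $S = V(G)$, which is trivially an $(r-1)$-tuple total dominating set since every vertex has exactly $r$ neighbors in $S$. The idea is to remove a carefully chosen independent-ish family of vertices from $S$ while keeping every vertex with at least $r-1$ neighbors in $S$. A vertex $v$ becomes ``unsafe'' if two of its neighbors are removed; so we may remove a set $D$ of vertices provided $D$ is such that no vertex of $G$ has two neighbors in $D$ — equivalently, $D$ induces no path of length two centered at any vertex, i.e. $D$ is a \emph{packing} in the sense that the closed neighborhoods... more precisely we need: for every $v\in V(G)$, $|N(v)\cap D|\le 1$. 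Such a set $D$ is exactly a set whose pairwise distance is at least $3$ except we only forbid common neighbors, so it is an \emph{open packing}. Then $S = V\setminus D$ works and $|S| = n - |D|$, so we want a large open packing.

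First I would show that a greedy/random selection produces an open packing $D$ with $|D|\ge \frac{n}{r(r-1)}$ whenever $G$ is not the incidence graph of a projective plane of order $r-1$: pick vertices one at a time, each chosen vertex ``blocks'' itself and, through each of its $r$ neighbors, at most $r-1$ other vertices (the other neighbors of each neighbor), so at most $1 + r(r-1)$ vertices are forbidden per selection; hence one can always continue until $|D|(r(r-1)+1)\ge n$, giving $|D|\ge \frac{n}{r(r-1)+1}$ in general. To push this to $\frac{n}{r(r-1)}$ one needs a local improvement: the ``blocked'' sets overlap unless the graph is locally like a generalized quadrangle / the incidence structure is extremal, and a discharging or counting argument on the number of length-$2$ paths shows that the extremal case forces $G$ to be the incidence graph of a projective plane of order $r-1$ (a biregular bipartite graph of girth $6$ where every two points lie on a common line). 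This characterization of the equality case is the step I expect to be the main obstacle — ruling out all other near-extremal configurations and showing they admit a slightly larger open packing.

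For the incidence graph $G$ of a projective plane of order $q = r-1$: it is bipartite with parts $P$ (points) and $L$ (lines), $|P|=|L| = q^2+q+1$, so $n = 2(q^2+q+1) = 2(r^2-r+1) = 2r(r-1)+2$... I would recompute: $q^2+q+1 = (r-1)^2 + (r-1) + 1 = r^2 - r + 1 = r(r-1)+1$, so $n = 2(r(r-1)+1)$ and the claimed bound $\frac{r(r-1)}{r(r-1)+1}n = 2r(r-1)$. For the upper bound $\gamma_{\times(r-1),t}(G)\le 2r(r-1)$, exhibit an explicit $S$: remove one point and one line that are not incident (or use the maximum open packing of size exactly $2$ here — actually one needs $|D| = 2$, one vertex from each side), which is valid since in a projective plane no two distinct points have more than one common line. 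For the lower bound, suppose $S$ is an $(r-1)$-tuple total dominating set with $|S| < 2r(r-1)$, so $|D| = |V\setminus S|\ge 3$; then $D$ must still be an open packing (each vertex keeps $\ge r-1$ neighbors in $S$ means $\le 1$ neighbor in $D$), and I would show that any open packing in the incidence graph of a projective plane has size at most $2$: if $D$ contained two points $p_1,p_2$, the line through them would have two neighbors in $D$; if $D$ contained two lines, dually; and a point and two lines, or a line and two points, similarly fail. Hence $|D|\le 2$, contradiction, which pins down the value exactly.

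Finally I would assemble: in the non-extremal case, $S = V\setminus D$ with $|D|\ge \frac{n}{r(r-1)}$ gives $\gamma_{\times(r-1),t}(G)\le n - \frac{n}{r(r-1)} = \frac{r(r-1)-1}{r(r-1)}n$; in the extremal case the two bounds above give equality. Each step is algorithmic: the greedy open-packing construction runs in polynomial time, and the extremal-case set is given explicitly.
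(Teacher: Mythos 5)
Your reduction is the right one: a set $D$ with $|N(v)\cap D|\le 1$ for every $v\in V(G)$ (an open packing) is exactly what can be deleted from $V(G)$ while leaving an $(r-1)$-tuple total dominating set, and your treatment of the projective-plane case is correct in both directions (any point--line pair is an open packing of size $2$ since the graph is bipartite, and any $D$ with $|D|\ge 3$ contains two points or two lines, whose common line or common point then retains only $r-2$ neighbours in the complement). That part matches the paper's argument for the extremal case.

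The gap is in the non-extremal case, and it is the heart of the theorem. Your greedy selection only yields an open packing of size $\frac{n}{r(r-1)+1}$, i.e. the bound $\frac{r(r-1)}{r(r-1)+1}n$, which is strictly weaker than the claimed $\frac{r(r-1)-1}{r(r-1)}n$. The passage from $r(r-1)+1$ to $r(r-1)$ in the denominator, together with the characterization of exactly which graphs obstruct it, is precisely the content you defer to an unspecified ``local improvement / discharging argument,'' and as written this is not a proof. The paper closes this gap by reformulating: an open packing is exactly an independent set of the auxiliary graph $G'$ on $V(G)$ in which two vertices are adjacent iff they have a common neighbour in $G$. Since $\Delta(G')\le r(r-1)$, Brooks' theorem gives a proper colouring of $G'$ with $r(r-1)$ colours --- hence a colour class, i.e.\ an open packing, of size at least $\frac{n}{r(r-1)}$ --- unless some component of $G'$ is a clique on $r(r-1)+1$ vertices. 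In that exceptional case one shows $G'$ must be disconnected, so $G$ is bipartite with parts of size $r(r-1)+1$ in each of which any two vertices have exactly one common neighbour, i.e.\ $G$ is the incidence graph of a projective plane of order $r-1$. If you wish to keep the packing language, you must replace your vague improvement step with this Brooks-type dichotomy (or an equivalent lower bound on the independence number of graphs with no $K_{\Delta+1}$ component); without it the main inequality of the theorem is unproved.
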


\begin{proof}
First we give the main idea of the proof. We construct a graph $G'$ from $G$ as follows. The vertices of $G'$ are the same as $G$ and two vertices are joined by an edge in $G'$ if and only if they have a common neighbor in $G$. We show that a proper coloring of $G'$, help us construct an $(r-1)$-tuple total dominating set for $G$. 

Now we show that a proper coloring of vertices of $G'$, gives a coloring for $G$ such that the colors of neighbors of each vertex of $G$ are different. Suppose that we use $c$ colors for this coloring, then there are at least $\frac{n}{c}$ of the vertices, that have the same color, suppose color red. So, the number of vertices with a color other than red, is at most $(1-\frac{1}{c})n$.
We claim that the set of vertices with a color other than red, denoted by $D$ is an $(r-1)$-tuple total dominating set for this graph. Since the colors of neighbors of each vertex are different, so every vertex has at least $r-1$ colors different than red and so $D$ is an $(r-1)$-tuple total dominating set.
Therefore, our aim is to minimize $c$.

Since $G$ is an $r$-regular graph, so $\Delta(G')\le r(r-1)$.
By Brooks' theorem, if $G'$ does not have a complete component of order $r(r-1)+1$, then $G$ has a proper coloring with $r(r-1)$ colors.
Now, we show that if $G'$ has a complete component of order $r(r-1)+1$ then $G$ is the incidence graph of a projective plane of order $r-1$. Note that $G'$ can not be a complete graph of order $r(r-1)+1$, because there is no $r$-regular graph of order $r(r-1)+1$ that any two of its vertices have a common neighbor. So $G'$ is disconnected and hence $G$ is bipartite. Because any two vertices in $G$ that have a path of even length, will have a path in $G'$ of half that length. Hence, if $G'$ is not connected, there exist two vertices with no even paths between them, this implies that $G$ has no odd cycles, because otherwise by connectedness of $G$, there is a path between these two vertices that has a vertex from this odd cycle, so by adding this cycle to this path, we get a path of even length. Since $G$ is $r$-regular, so each part of the bipartite graph $G$ have the same number of vertices. Since between every two vertices of each part there are paths of even length, so the vertices of each part form a connected component of $G'$. Hence, each part has $r(r-1)+1$ vertices. 
Also, in one part of G, each pair of vertices has a unique common neighbor. So this property holds in the other part, because the number of vertices and the degrees are the same in both parts. This is because, every two vertices in one part of $G$ are joined by an edge in $G'$ and hence they have a common neighbor. The number of these neighbors is at most ${{r(r-1)+1}\choose {2}}/ {r\choose 2}$ which is $r(r-1)+1$. This is exactly the number of vertices of the other part, so there can not be two vertices with more than one common neighbor.
Therefore $G$ is the incidence graph of the finite projective plane of order $r-1$.

Now we show that the $(r-1)$-tuple dominating number of the incidence graph of a projective plane of order $r-1$ is exactly $2r(r-1)$. Assume we have a set of size less than $2r(r-1)$, then at least the size of this set in one of the two parts of the bipartite graph $G$ is less than $r(r-1)$. So, it misses at least two vetrices from that part, since these two vertices have a common neighbor in the other part and that vertex has degree $r$, hence it has at most $r-2$ neighbors from our set, showing that this set is not $(r-1)$-tuple dominating set.
\end{proof}

\section{$r$-tuple dominating set}
Similar to the previous section, we give a theorem about the $\gamma_{\times r}(G)$ for $r$-regular graphs. 
First, we need to present the definition of a special class of graphs known as Moore graphs. A Moore graph is a regular graph of degree $r$ and diameter $d$ whose number of vertices equals to the upper bound
$$
1+r\sum _{i=0}^{d-1}(r-1)^{i}.
$$.
\begin{theorem}
\label{3}
If $G$ is an $r$-regular graph of order $n$ which is not a Moore graph of degree $r$ and diameter $2$ then $\gamma_{\times r}(G) \leq \frac{r^2-1}{r^2}n$, otherwise $\gamma_{\times r}(G)=\frac{r^2}{r^2+1}n=r^2$.
\end{theorem}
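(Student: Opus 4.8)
The plan is to mimic the auxiliary‑graph construction from the proof of Theorem \ref{2}, but with the \emph{closed} neighborhood replacing the open one. From $G$ we build $G''$ on the same vertex set, joining $u$ and $v$ whenever they have a common vertex in their closed neighborhoods — equivalently, whenever $\mathrm{dist}_G(u,v)\le 2$. A proper coloring of $G''$ is exactly an assignment of colors to $V(G)$ in which, for every vertex $v$, all the vertices of $N[v]$ receive distinct colors. If such a coloring uses $c$ colors, then some color class (say red) has at least $n/c$ vertices, and deleting it leaves a set $D$ with $|D|\le (1-\tfrac1c)n$; since the $r+1$ vertices of each $N[v]$ are rainbow‑colored, $v$ still sees at least $r$ distinct non‑red colors, hence $|N[v]\cap D|\ge r$, so $D$ is an $r$‑tuple dominating set. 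Thus it suffices to bound $\chi(G'')$.

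Next I would bound the maximum degree of $G''$. A vertex $v$ has, besides itself, its $r$ neighbors and at most $r(r-1)$ vertices at distance exactly $2$, so $\Delta(G'')\le r^2$. By Brooks' theorem, $\chi(G'')\le r^2$ unless $G''$ has a connected component that is a complete graph $K_{r^2+1}$ (the odd‑cycle case is impossible since $r^2$ is even when... — more simply, one checks $K_{r^2+1}$ is the only obstruction one must rule out). So the remaining task is: if $G''$ contains a clique on $r^2+1$ vertices, then $G$ is a Moore graph of degree $r$ and diameter $2$, and conversely for such $G$ we have $\gamma_{\times r}(G)=r^2$.

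For the structural direction, suppose $v$ lies in a $K_{r^2+1}$‑component of $G''$. Since $\deg_{G''}(v)=r^2$ forces every vertex at distance $\le 2$ from $v$ to be a distinct vertex, $v$ has exactly $r$ neighbors and exactly $r(r-1)$ vertices at distance $2$, with no coincidences; moreover the component being exactly $K_{r^2+1}$ means there are \emph{no} vertices at distance $\ge 3$ from $v$, so $G$ has diameter $2$ and meets the Moore bound $1+r+r(r-1)=r^2+1$ with equality, hence $n=r^2+1$ and $G$ is a Moore graph of degree $r$ and diameter $2$. Conversely, if $G$ is such a Moore graph then $n=r^2+1$; any $r$‑tuple dominating set of size $\le r^2$ omits at least two vertices $u,w$, and because a Moore graph of diameter $2$ has girth $5$, $u$ and $w$ have at most one common neighbor, so if they were adjacent some vertex would see $\le r-1$ chosen neighbors and if non‑adjacent their unique common neighbor would — in either case contradicting the $r$‑tuple condition; hence $\gamma_{\times r}(G)\ge r^2$, and taking $D=V(G)$ minus any one vertex shows $\gamma_{\times r}(G)=r^2$, while $\tfrac{r^2}{r^2+1}n=r^2$.

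The main obstacle is the structural step: extracting, purely from the existence of a $K_{r^2+1}$ in $G''$, that $G$ has the exact Moore‑graph parameters — in particular arguing that $v$ has \emph{no} vertex at distance $\ge 3$ (so that the component does not just contain but equals $K_{r^2+1}$ and that it is the whole graph by connectedness of $G$), and that the distance‑$2$ neighborhood is "spread out" with the right multiplicities so that the Moore bound is attained with equality; the girth‑$5$ consequence then has to be used carefully in the converse to handle the adjacent and non‑adjacent cases for the two omitted vertices.
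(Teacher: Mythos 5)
Your proposal is correct and follows essentially the same route as the paper: the same distance-$\le 2$ auxiliary graph $G''$, the same Brooks'-theorem bound $\chi(G'')\le r^2$ with $K_{r^2+1}$ as the only obstruction, and the same identification of that exceptional component with a Moore graph of degree $r$ and diameter $2$. The only cosmetic difference is in the lower bound $\gamma_{\times r}(G)\ge r^2$ for the Moore case, where you invoke girth $5$ while the paper's lemma needs only diameter $2$ (existence, not uniqueness, of a common neighbor of the two omitted vertices).
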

\begin{proof}
The idea is similar to previous section, we want to color the vertices of $G$ such that the color of each vertex is different from its neighbors and also the colors of neighbors of each vertex are also different. So, we construct a graph $G''$ such that if $v_i$ and $v_j$ are adjacent in $G$ then $v_iv_j\in E(G'')$. Also, if $v_i$ and $v_j$ are adjacent to $v_k$ in $G$, then $v_iv_j\in E(G'')$.  Similar to the previous section, we color the vertices of $G''$ properly. Note that $\Delta(G'')\leq r^2$.
Now, we show that the only case that $G''$ has a complete component of order $r^2+1$ is when $G= G_{2,r}$. $G''$ is connected because $G$ is connected and it is a subgraph of $G''$. So, $G''$ should be a complete graph of order $r^2+1$ and so the diameter of $G$ should be $2$. which means, $G$ is a Moore graph of degree $r$ and diameter $2$. It is proven that except for $r=2,3,7,57$, Moore graph of diameter $2$ can not exist. Moore graphs of order $r=2,3,7$ is known, but for $r=57$ it is unknown \cite{moore}. 
So, if $G$ is not a Moore graph, then we need $r^2$ colors for coloring of $G''$. The last part follows from the following lemma.
\end{proof}
\begin{lemma} 
If $G$ is an $r$-regular graph of diameter $2$ with $n$ vertices then $\gamma_{\times r}(G)=n-1$.
\end{lemma}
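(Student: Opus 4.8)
The plan is to show both inequalities $\gamma_{\times r}(G)\ge n-1$ and $\gamma_{\times r}(G)\le n-1$ for an $r$-regular graph $G$ of diameter $2$. The lower bound is the easy direction: if $S$ is an $r$-tuple dominating set and $|S|\le n-2$, pick two vertices $u,w\notin S$. Since $G$ has diameter $2$ and is $r$-regular, consider $w$ itself: we need $|N[w]\cap S|\ge r$, but $|N[w]|=r+1$ and $w\notin S$, so in fact every neighbor of $w$ must lie in $S$ — that is, $N(w)\subseteq S$ for every $w\notin S$. Now use the diameter-$2$ hypothesis on the pair $u,w$: if $u$ and $w$ are adjacent, then $u\in N(w)\subseteq S$, contradiction; if they are non-adjacent, a common neighbor $x$ exists, and $x\in N(w)\subseteq S$ while simultaneously $N[x]$ must contain $r$ vertices of $S$ — but $N[x]$ has $r+1$ vertices and contains both $u$ and $w$, neither in $S$, so $|N[x]\cap S|\le r-1$, contradiction. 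Hence $|S|\ge n-1$.

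For the upper bound, I would exhibit an explicit $r$-tuple dominating set of size $n-1$, namely $S=V(G)\setminus\{v\}$ for an arbitrary vertex $v$. I must check $|N[w]\cap S|\ge r$ for all $w$. If $w\ne v$ and $w\notin N(v)$, then $N[w]\subseteq S$, so $|N[w]\cap S|=r+1\ge r$. If $w=v$, then $N[v]\cap S=N(v)$, which has size $r\ge r$. If $w\in N(v)$, then $N[w]$ misses only $v$ among its $r+1$ vertices, so $|N[w]\cap S|=r\ge r$. In all cases the condition holds, so $S$ is an $r$-tuple dominating set and $\gamma_{\times r}(G)\le n-1$.

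Combining the two bounds gives $\gamma_{\times r}(G)=n-1$. Note the argument for the upper bound did not even use diameter $2$; only the lower bound needs it, and there the hypothesis is exactly what rules out a smaller dominating set. The one point that deserves care is the lower-bound case analysis: one must handle separately the possibility that the two excluded vertices are adjacent versus non-adjacent, and in the non-adjacent case confirm that a common neighbor — guaranteed by diameter $2$ — genuinely produces the contradiction regardless of whether that common neighbor is itself one of the excluded vertices (it cannot be, since a vertex is not its own neighbor). I expect this bookkeeping to be the only mild obstacle; the rest is immediate from $r$-regularity and $|N[w]|=r+1$.
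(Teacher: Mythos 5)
Your proof is correct and follows essentially the same route as the paper: the upper bound is the trivial observation that any $n-1$ vertices suffice (which you verify explicitly, and the paper leaves as "clear"), and the lower bound is the same case analysis on whether the two excluded vertices are adjacent or share a common neighbor guaranteed by diameter $2$. Your intermediate remark that $N(w)\subseteq S$ for every $w\notin S$ is a harmless repackaging of the paper's counting argument.
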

\begin{proof}
It is clear that any $n-1$ vertices of $G$ form an $r$-tuple dominating set. Now assume $S$ is a subset of vertices of size at most $n-2$, then there are two vertices $x$ and $y$ that are not in $S$. If $x$ and $y$ are not connected then since the diameter is $2$, they have a common neighbor $z$, which can have at most $r-2$ neighbors in $S$. This shows that $S$ is not an $r$-tuple dominating set. If $x$ is connected to $y$ then $x$ can have at most $r-1$ neighbors in $S$ and therefore again $S$ is not an $r$-tuple dominating set. This proves the lemma.
\end{proof}

\section{Conclusion}
In this paper, we use a simple idea for computing upper bounds for $(r-1)$-tuple total domination number and  $r$-tuple domination number for $r$-regular graphs.
Theorem \ref{2} is a general case of theorem \ref{1}. Also, Theorem \ref{2} and Theorem \ref{3} give better upper bounds than Theorem \ref{11} and Theorem \ref{22} for $\gamma_{\times r-1,t}(G)$ and $\gamma_{\times r}(G)$ when $G$ is a $r$-regular graph, respectively.
Our proofs are simple and also algorithmic,  in contrast to the previous results that were existential.
Also, this idea is applicable for similar situation when the minimum vertex degree is bounded from below. In order to keep our paper simple, we omitted theses applications.

\end{document}